\newcommand{\C}{\mathbb{C}}
\newcommand{\R}{\mathbb{R}}
\newcommand{\N}{\mathbb{N}}
\newcommand{\Hq}{\mathbb{H}}
\newcommand{\bH}{\mathbb{H}}
\newcommand{\la}{\lambda}
\newcommand{\hcal}{\mathcal{H}}
\newcommand{\bP}{\mathbb{P}}
\newcommand{\e}{\epsilon}
\newcommand{\Ran}{\mbox{Ran}}
\newcommand{\Hi}{\mathcal{H}}
\newcommand{\B}{\mathcal{B}}
\newcommand{\A}{\mathcal{A}}
\newtheorem{theorem}{Theorem}
\newtheorem{lemma}[theorem]{Lemma}
\newtheorem{proposition}[theorem]{Proposition}
 \newtheorem{definition}[theorem]{Definition}
\begin{document}

\thanks{The first and second authors were partially supported by FCT through CAMGSD, project UID/MAT/04459/2019. The third author was partially supported by FCT through CMA-UBI, project UIDB/00212/2020. The fourth author was partially supported by FCT through CIMA, project UIDB/04674/2020.}

\title{On the relation between S-Spectrum and Right Spectrum}

\author[L. Carvalho]{Lu\'{\i}s Carvalho}
\address{Lu\'{\i}s Carvalho, Instituto Universitário de Lisboa (ISCTE-IUL), Business Research Unit (BRU-IUL)\\    Av. das For\c{c}as Armadas\\     1649-026, Lisbon\\   Portugal, and Center for Mathematical Analysis, Geometry,
and Dynamical Systems\\ Mathematics Department,
Instituto Superior T\'ecnico, Universidade de Lisboa\\  Av. Rovisco Pais, 1049-001 Lisboa,  Portugal}
\email{luis.carvalho@iscte-iul.pt}
\author[C. Diogo]{Cristina Diogo}
\address{Cristina Diogo, Instituto Universitário de Lisboa (ISCTE-IUL)\\    Av. das For\c{c}as Armadas\\     1649-026, Lisbon\\   Portugal, and Center for Mathematical Analysis, Geometry,
and Dynamical Systems\\ Mathematics Department,
Instituto Superior T\'ecnico, Universidade de Lisboa\\  Av. Rovisco Pais, 1049-001 Lisboa,  Portugal
}
\email{cristina.diogo@iscte-iul.pt}
\author[S. Mendes]{S\'{e}rgio Mendes}
\address{S\'{e}rgio Mendes, Instituto Universitário de Lisboa (ISCTE-IUL)\\    Av. das For\c{c}as Armadas\\     1649-026, Lisbon\\   Portugal\\ and Centro de Matem\'{a}tica e Aplica\c{c}\~{o}es \\ Universidade da Beira Interior \\ Rua Marqu\^{e}s d'\'{A}vila e Bolama \\ 6201-001, Covilh\~{a}}
\email{sergio.mendes@iscte-iul.pt}
\author[H. Soares]{Helena Soares}
\address{Helena Soares, Instituto Universitário de Lisboa (ISCTE-IUL), Business Research Unit (BRU-IUL)\\    Av. das For\c{c}as Armadas\\     1649-026, Lisbon\\   Portugal\\ and Centro de Investigação em Matemática e Aplicações \\ Universidade de Évora \\  Colégio Luís António Verney, Rua Romão Ramalho, 59, 7000-671 Évora, Portugal}
\email{helena.soares@iscte-iul.pt}
\subjclass[2010]{47A10, 47S05}
\keywords{Spectrum, Quaternionic Hilbert spaces}

\keywords{}
\date{\today}

\begin{abstract}


We use the $\R$-linearity of $I\lambda-T$ to define $\sigma(T)$, the right spectrum of a right $\Hq$-linear operator $T$ in a right quaternionic Hilbert space. We show that $\sigma(T)$  coincides with the $S$-spectrum $\sigma_S(T)$.

\end{abstract}

\maketitle

\section{Introduction}
\onehalfspacing
%

In a complex Hilbert space $\hcal_\C$ the spectrum of a bounded operator $T \in \B(\hcal_\C)$ is the well-known set
\[
\sigma_{\C}(T)=\big\{\lambda \in \C: I\lambda -T \mbox{ is non invertible in } \B(\hcal_\C)\big\}.
\]
When $\Hi$ is a (right) quaternionic Hilbert space, the  spectrum is more elusive, due to the non commutativity of scalar multiplication. 
Nevertheless, quaternionic Hilbert spaces have been used  with the notion of point spectrum, i.e eigenvalues. It is however, clear that this notion is not sound when $\Hi$ is infinite dimensional and therefore, some careful adaptations need to be done.

It is well-known that if the complex Hilbert space $\hcal_\C$ is finite dimensional then the notion of spectrum is the set of eigenvalues.  But when $\hcal_\C$ is infinite dimensional, the spectrum of an operator $T$ is more than just the eigenvalues. More precisely, according to the nature of the failure of the invertibility of $T_\la:=I\lambda-T\in\B(\hcal_\C)$, the spectrum is the union of three disjoint sets:
the point spectrum $\sigma_{\C,p}(T)$, the set of complex numbers $\la$ where the operator $T_\la$ is not injective; 
the residual spectrum $\sigma_{\C,r}(T)$, the set of $\la \in \C\setminus\sigma_{\C,p}(T)$ where the  range of $T_\la$ is not dense in $\hcal_\C$, $\overline{\Ran(T_\la)}\neq \hcal$; 
and the continuous spectrum $\sigma_{\C,c}(T)$, the set of $\la\in\C\setminus(\sigma_{\C,p}(T)\cup\sigma_{\C,r}(T))$ where $T_\la$ is not bounded below. 

To define the spectrum of a quaternionic right linear operator $T \in \B(\hcal)$ we cannot simply
emulate what happens in the complex case. In fact, the operator of right multiplication by a quaternion, $I\lambda: \hcal \to \hcal$ defined by $I\lambda(x)\coloneqq x\lambda$, is not right $\Hq$-linear. Thus,  $T_\la$ is also not right $\Hq$-linear, i.e. $T_\la$ is not in $\B(\Hi)$, but on the contrary, right multiplication by a quaternion is clearly $\R$-linear and so is $T_{\lambda}$.
%
%
Therefore, it is natural to define a notion of spectrum based on the invertibility of the $\R$-linear operator $T_\la$. In other words, we consider the larger space of bounded $\R$-linear operators on $\Hi$, denoted by $\B_\R(\Hi)$. Accordingly, we define a modified right spectrum to be the subset of $\Hq$
\begin{equation}\label{R-spectrum}
\sigma(T)=\big\{\lambda \in \Hq: I\lambda -T \mbox{ is non invertible in } \B_\R(\Hi)\big\},
\end{equation}
which resembles the usual notion of spectrum in the complex setting. The spectrum defined this way can be decomposed in the usual family of disjoint sets, 
  \[
  \sigma(T)=\sigma_p(T)\cup\sigma_r(T)\cup\sigma_c(T),
  \]
where $\sigma_p(T)$, $\sigma_r(T)$ and $\sigma_c(T)$ are respectively the point, residual and continuous spectra, defined again by 
     \begin{align*}
        \sigma_p(T)=&\{\la \in \bH: T_\la(x)=0,\; \text{for some}\; x \in \hcal\setminus \{0\}\},\\
        \sigma_r(T)=&\{\la\in\Hq\setminus \sigma_p(T): \overline{\Ran(T_\la)}\neq \hcal \},\\
        \sigma_c(T)=&\{\la \in \Hq\setminus\big(\sigma_c(T)\cup \sigma_p(T)\big): T_\la \mbox{ is not bounded below}\}.
    \end{align*}



On the other hand, the quaternionic functional calculus has seen a recent major breakthrough and the fundamental stepping-stone of that leap is the introduction of the operator $\Delta_\la(T)=T^2-2Re(\la)T+|\la|^2I$ in $\B(\hcal)$, for any $\la \in \Hq$. The non-invertibility of $\Delta_\la(T)$, for a given $T \in \B(\hcal)$, defines a new notion of spectrum, the $S$-spectrum,
\begin{equation*}\label{S-spectrum}
\sigma_S(T)=\big\{\lambda \in \Hq: \Delta_\la(T) \mbox{ is non invertible in } \B(\Hi)\big\}.
\end{equation*}

Again, the $S$-spectrum can be split  into three disjoint sets: the point $S$-spectrum $\sigma_{S,p}(T)$, the residual $S$-spectrum $\sigma_{S,r}(T)$, 
and the continuous $S$-spectrum $\sigma_{S,c}(T)$, (see \cite{CGK} and references therein).

This similarity raises the question whether these sets are equal to the ones defined for the right spectrum (\ref{R-spectrum}). The purpose of this paper is to answer affirmatively to this question. We prove that the notions of right spectrum and $S$-spectrum coincide. Actually, we show that each component in the partition of the spectrum coincides with the corresponding component in the partition of the $S$-spectrum, that is 
\[
\sigma_{p}(T)=\sigma_{S,p}(T), \quad \sigma_{c}(T)=\sigma_{S,c}(T), \quad \sigma_{r}(T)=\sigma_{S,r}(T). 
\]

Before delving into the gory details, two comments are worth making. First, our result provides a handy way to calculate the spectrum but the introduction of the operator $\Delta_\la$ was crucial for further developing quaternionic operator theory. In fact, many achievements were unthinkable without the discovery of the S-spectrum. To name a few, a generalization of the Riesz-Dunford functional calculus for holomorphic functions to quaternionic linear operators \cite{CSS}, the continuous functional calculus for normal operators on a quaternionic Hilbert space \cite{GMP} and spectral theorems for unitary \cite{ACKS} and for unbounded normal quaternionic linear operators \cite{ACK}, among others.
The second observation is that the equality of the two spectra notions in this paper is natural and in a sense easy, but to the best of our knowledge, has passed unnoticed in the literature, except the equality of the right and S-spectrum in the finite dimensional case.  There is also one article that mentions the equality of the right spectrum and the S-spectrum in infinite dimension, but again only proves the finite dimensional case, establishing the equality of the right spectrum and the point S-spectrum (see \cite[Theorem 2.5]{CS}).



For convenience of the reader, we recall some basic definitions and results. The division ring of real quaternions $\Hq$ is an algebra over $\R$ with basis $\{1, i, j, k\}$ and  product given by $i^2=j^2=k^2=ijk=-1$. The pure quaternions are denoted by $\bP=\mathrm{span}_{\R}\,\{i,j,k\}$. The real and imaginary parts of a quaternion $q=a_0+a_1i+a_2j+a_3k\in\Hq$ are denoted by $Re(q)=a_0$ and $Im(q)=a_1i+a_2j+a_3k\in\bP$, respectively. The conjugate of $q$ is given by $q^*=Re(q)-Im(q)$ and its norm is $|q|=\sqrt{qq^*}$. Two quaternions $q_1,q_2\in\Hq$ are called similar, and we write $q_1\sim q_2$, if there exists $s \in \Hq$ with $|s|=1$ such that $s^{*}q_2 s=q_1$. Similarity is an equivalence relation and the class of $q$ is denoted by $[q]$. A necessary and sufficient condition for the similarity of $q_1$ and $q_2$ is that $Re(q_1)=Re(q_2) \textrm{ and }|Im(q_1)|=|Im(q_2)|$.

A right $\Hq$-module $\Hi$ equipped with an inner product $\langle.,.\rangle$ is called a right quaternionic Hilbert space if it is complete
with respect to the norm $\|x\|=\sqrt{\langle x,x\rangle}$, $x\in \Hi$.

We can look at $\Hi$ as a vector space over $\Hq$ or $\R$. This allows us to introduce two notions of linear operators over the two fields.
A right $\bH$-linear operator is a map $T: \hcal \to \hcal$ such that 
    \[
    T(u\alpha+v\beta)=T(u)\alpha+T(v)\beta,\; \text{for any} \;u, v \in \hcal \; \text{and} \; \alpha, \beta \in \bH.
    \]
Analogously, if the above equality holds for any $\alpha, \beta \in \R$, we say that $T$ is an $\R$-linear operator. Furthermore, recall that an operator $T: \hcal \to \hcal$ is bounded if there exists $K\geq 0$ such that $\|Tx\|\leq K\|x\|$, $x\in \Hi$, and that the norm of $T$ is defined by
$\|T\|=\mathrm{sup}\,\{\|Tx\|:\|x\|=1\}$.

Accordingly, we denote by $\B(\hcal)$ the set of all bounded right $\bH$-linear operators on $\hcal$ and by $\B_\R(\hcal)$ the set of all bounded $\R$-linear operators on $\hcal$.
Since a $\Hq$-linear map is a $\R$-linear map, we have
$\B(\hcal) \subseteq  \B_\R(\hcal)$.


Given $T\in\mathcal{B}(\Hi)$ and $\la\in\Hq$, we define the operator $\Delta_\la(T):\Hi\longrightarrow \Hi$ by $\Delta_\la(T)=T^2-2Re(\la)T+|\la|^2 I$.
Clearly, $\Delta_\la(T)$ is a bounded right $\Hq$-linear operator. The S-spectrum of $T$, $\sigma_S(T)$, is defined by
\begin{eqnarray*}
\sigma_S(T)   &=& \big\{\la \in \bH: \Delta_\la(T) \mbox{ is not invertible in } \B(\hcal)\big\}.
\end{eqnarray*}
The S-spectrum $\sigma_S(T)$ is a compact nonempty subset of $\Hq$ and it is always contained in the closed ball of radius $\|T\|$ around origin $\overline{B(0, \|T\|)}$. One can show that $\la\in \sigma_S(T)$ is equivalent to $[\la]\subseteq \sigma_S(T)$ (\cite[Theorem 9.2.2, Theorem 9.2.3]{CGK}).




Now, recall that a  bounded linear operator $T$ is invertible  in $\B(\Hi)$ if, and only if, $T$ has a dense range and it is bounded
from below, the latter meaning that there exists $k > 0$ such that $\|Tx\| \geq k\|x\|$ for every $x \in \Hi$ (see \cite[page 196]{CGK}). 
A classical partition of the spectrum into three disjoint parts, according to the nature of the failure of $\Delta_\la(T)$ to be invertible is the following:
\[\sigma_S(T)=\sigma_{S,p}(T)\cup\sigma_{S,c}(T)\cup\sigma_{S,r}(T),\]
where
\begin{align*}
        \sigma_{S,p}(T)=&\{\la \in \bH: \Delta_\la(T)(x)=0, \; \text{for some}\; x \in \hcal\setminus \{0\}\},\\
        \sigma_{S,r}(T)=&\{\la \in \Hq\setminus \sigma_{S,p}(T): \overline{\Ran( \Delta_\la(T)}\neq \hcal \},\\
        \sigma_{S,c}(T)=&\{\la \in \Hq\setminus\big(\sigma_{S,p}(T)\cup \sigma_{S,r}(T)\big): \Delta_\la(T) \mbox{ is not bounded below}\}.
    \end{align*}

\medskip
Consider the operator $T_\la\coloneqq I\lambda -T: \hcal \to \hcal$, defined before as $T_\la(x)=x\la-Tx$. Although $T_\la$ is not right $\Hq$-linear, it is $\R$-linear operator. Therefore, it makes sense to talk about invertibility of $T_\la$ in $\B_{\R}(\Hi)$ and it is natural to define the spectrum of $T\in \B(\Hi)$ as follows.

\begin{definition}
  Let $ T\in \B(\hcal)$. Then the right spectrum of $T$ is the set
   \[
\sigma(T)  = \big\{\la \in \bH: T_\la \mbox{ is not invertible in } \B_{\R}(\hcal)\big\}.
\] 
\end{definition}

\medskip

Similarly, $\sigma(T)$ splits into a disjoint union of three parts
  \[
  \sigma(T)=\sigma_p(T)\cup\sigma_c(T)\cup\sigma_r(T),
  \]
where 
the point spectrum, the continuous spectrum and residual spectrum of $T$ are, respectively, 
     \begin{align*}
        \sigma_p(T)=&\{\la \in \bH: T_\la(x)=0,\; \text{for some}\; x \in \hcal\setminus \{0\}\},\\
        \sigma_r(T)=&\{\la\in\Hq\setminus \sigma_p(T): \overline{\Ran(T_\la)}\neq \hcal \},\\
        \sigma_c(T)=&\{\la \in \Hq\setminus\big(\sigma_r(T)\cup \sigma_p(T)\big): T_\la \mbox{ is not bounded below}\}.
    \end{align*}

\bigskip

Recall that a set $\A\subset \Hq$ is circular if $\la \in \A$, then $[\la] \subset \A$.
It is known that $\sigma_S(T)$ is circular as well as $\sigma_{S,p}, \sigma_{S,r}, \sigma_{S,c}$ (see \cite[page 33]{GMP}). The same is true for  $\sigma(T)$, $\sigma_p(T)$, $\sigma_r(T)$ and $\sigma_c(T)$ as shown in the following lemma. Before,  observe that when $\la =q \mu q^*$, for some unitary quaternion $q$, we have $\big( T_\la x\big)q= T_\mu (xq)$. In fact,
  \begin{align}
        T_\la x=& x\la -Tx=x(q\mu q^*)- T\big(x(q q^*)\big)  \nonumber
        \\
        =& (xq\mu)q^*-T(xq)q^*=\Big((xq)\mu-T(xq)\Big)q^*  \nonumber
        \\
        =&\Big(T_\mu (xq) \Big) q^*. \label{equality}
    \end{align}
 Note that we cannot go further than this since $T_\mu$ is not right $\Hq-$linear, only $T$ is.
     
\begin{lemma} \label{circular}
Let $T \in \B(\hcal)$. The sets $\sigma_p(T)$, $\sigma_r(T)$ and $\sigma_c(T)$ are circular. In particular, $\sigma(T)$ is circular.
\end{lemma}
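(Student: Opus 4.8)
The plan is to exploit the identity (\ref{equality}) together with the elementary fact that right multiplication by a unit quaternion is an isometric bijection of $\hcal$. Write $R_q\colon \hcal\to\hcal$, $R_q(x)=xq$, for $q\in\Hq$ with $|q|=1$. Then $R_q$ is $\R$-linear, $\|xq\|=\|x\|\,|q|=\|x\|$, and $R_q$ is a bijection with inverse $R_{q^*}$ (since $qq^*=1$); in particular $R_q$ is a homeomorphism of $\hcal$ onto itself, so it is injective, maps $\hcal$ onto $\hcal$, and commutes with topological closure, $\overline{R_q(S)}=R_q(\overline{S})$ for every $S\subseteq\hcal$. I would record these properties first. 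Since similarity is an equivalence relation, a set is circular precisely when it is a union of similarity classes; hence it suffices to prove separately that each of $\sigma_p(T)$, $\sigma_r(T)$, $\sigma_c(T)$ is a union of classes, and the circularity of $\sigma(T)$ follows at once because a union of circular sets is circular.

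For the point spectrum, I would fix $\la\in\sigma_p(T)$ with a witness $x\in\hcal\setminus\{0\}$, $T_\la x=0$, and let $\mu\in[\la]$. By definition of similarity I may write $\la=q\mu q^*$ for some unit quaternion $q$, so (\ref{equality}) gives
\[
T_\mu(xq)=(T_\la x)q=0.
\]
Since $R_q$ is injective and $x\neq 0$, we have $xq\neq 0$, whence $\mu\in\sigma_p(T)$. Thus $\sigma_p(T)$ is circular.

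For the residual spectrum, take $\la\in\sigma_r(T)$ and $\mu\in[\la]$. As $\la\notin\sigma_p(T)$ and $\sigma_p(T)$ is already known to be circular, also $\mu\notin\sigma_p(T)$. From (\ref{equality}) and the surjectivity of $R_q$, as $x$ runs over $\hcal$ so does $xq$, and therefore $\Ran(T_\mu)=R_q\big(\Ran(T_\la)\big)$. Applying the closure identity for $R_q$,
\[
\overline{\Ran(T_\mu)}=R_q\big(\overline{\Ran(T_\la)}\big).
\]
Because $\overline{\Ran(T_\la)}\neq\hcal$ and $R_q$ is a bijection of $\hcal$, the right-hand side is a proper subset of $\hcal$, so $\overline{\Ran(T_\mu)}\neq\hcal$ and $\mu\in\sigma_r(T)$. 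For the continuous spectrum, let $\la\in\sigma_c(T)$ and $\mu\in[\la]$; circularity of $\sigma_p(T)$ and $\sigma_r(T)$ gives $\mu\notin\sigma_p(T)\cup\sigma_r(T)$. The norm identities $\|T_\mu(xq)\|=\|(T_\la x)q\|=\|T_\la x\|$ and $\|xq\|=\|x\|$ show that $T_\mu$ is bounded below if and only if $T_\la$ is; since $T_\la$ is not bounded below, neither is $T_\mu$, so $\mu\in\sigma_c(T)$.

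The only step requiring genuine care is the residual case, where one must check that $R_q$ carries $\Ran(T_\la)$ exactly onto $\Ran(T_\mu)$ and interchanges with closure; both rest on $R_q$ being an $\R$-linear isometric bijection, established at the outset. Everything else reduces to substituting (\ref{equality}) and using that $R_q$ preserves the norm. I do not expect any serious obstacle beyond keeping track of the convention $\la=q\mu q^*$ when passing between similar quaternions.
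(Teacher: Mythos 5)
Your proof is correct and takes essentially the same route as the paper's: both rest on the identity (\ref{equality}) combined with the fact that right multiplication by a unit quaternion is an isometric bijection of $\hcal$, applied in turn to the point, residual, and continuous parts. Your explicit bookkeeping via $R_q$ (injectivity giving $xq\neq 0$, commutation with closure, norm preservation for the bounded-below transfer) only makes precise steps the paper uses implicitly.
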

\begin{proof}
 We will prove this result by showing that when $\la \in \A$, then any element of the form $\mu=q \la q^*$, with unitary $q \in \bH$, 
is also in $\A$, and therefore $[\la] \subseteq \A$, where $\A$ is one of the above spectra.

Let $\lambda \in \sigma_p(T)$. If $x \in \hcal\setminus\{0\}$ is such that $T_\la (x)=0$, taking $y=xq$ and using (\ref{equality}) we have $T_\mu(y)=0$. Then  $\mu \in \sigma_p(T)$ and therefore $\sigma_p(T)$ is circular.

We now prove that  $\sigma_r(T)$ is circular. Let $\lambda \in \sigma_r(T)$. Note that $\lambda \notin \sigma_p(T)$, $\mu=q \la q^*$ and $\sigma_p(T)$ is circular, hence $\mu \notin \sigma_p(T)$.
Since $\bH q^*=\bH$, using \eqref{equality} and taking $y=xq$ we have 
\begin{align*}
\overline{\Ran(T_\mu)} =& \overline{\{T_\mu(y): y \in \hcal \}}= \overline{\{T_\mu(xq) : xq \in \hcal\}}
    \\
    =& \overline{\{T_\la(x) q: xq \in \hcal\}}= \overline{\{T_\la(x) q: x \in \hcal q^*\}}
    \\
    =& \overline{\{T_\la(x) q: x \in \hcal\}}= \overline{\{T_\la(x): x \in \hcal\}q}
    \\
   =& \overline{\Ran(T_\la)} q 
\end{align*}
Therefore, $\overline{\Ran(T_\la)} \neq \hcal$ implies that $\overline{\Ran(T_\la)}q =\overline{\Ran(T_\mu)} \neq \hcal$. So  $\sigma_r(T)$ is circular.

It remains to see that $\sigma_c(T)$ is circular. Let $\lambda \in \sigma_c(T)$. Then $\lambda \notin \sigma_p(T) \cup \sigma_r(T)$, $\mu=q \la q^*$ and $\sigma_p(T)$ and $\sigma_r(T)$ are both circular, so $\mu \notin \sigma_p(T) \cup \sigma_r(T)$. Since $T_\lambda$ is not bounded below then there is a sequence $x_n \in \hcal$ such that $\|T_\lambda x_n\| \to 0$. Take $y_n=x_n q$, with  $q \in \bH$ a unitary quaternion, such that $\la= q \mu q^*$, then, again by (\ref{equality}), we have  $\|T_\mu(y_n)\|= \|\big(T_\la(x_n) \big)q\| \to 0$. Then $T_\mu$ is not bounded below and so $\sigma_c(T)$ is circular.

\end{proof}

Before we prove our main theorem we note that the operators $\Delta_\la(T)$ and $T_\la$ are related by composition $\Delta_\la(T)=T_\la\cdot T_{\la^*}$. In fact, for all $x\in\Hi$, we have
\begin{align*}
    T_\la\cdot T_{\la^*}(x)=&(I\la-T)\cdot(I\la^*-T)(x)=(I\la-T)(x\la^*-Tx)
      \\
    =& (x\la^*-Tx)\la-T(x\la^*-Tx)   =x|\la|^2 -(Tx)\la - (Tx)\la^*+T^2(x)
    \\
    =&T^2(x)-(Tx) (\la+\la^*)+|\la|^2x=\big(T^2-2Re(\la)T+|\la|^2I\big)x
    \\
    =&\Delta_\la(T)(x).
\end{align*}
Likewise we can prove that $T_{\la^*}\cdot T_\la=\Delta_\la(T)$. Summing up we have:

\begin{proposition}\label{prop_delta=TT*}
     Let $T\in\B(\hcal)$. Then 
    \[ \Delta_\la(T)= T_\la\cdot T_{\la^*}= T_{\la^*}\cdot T_\la. \]
\end{proposition}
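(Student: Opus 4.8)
The plan is to verify both identities by a single direct computation, evaluating each composition on an arbitrary vector $x\in\Hi$ and reducing to the defining formula $\Delta_\la(T)=T^2-2Re(\la)T+|\la|^2I$. The only structural inputs needed are that $T$ is right $\Hq$-linear, so that $T(x\mu)=(Tx)\mu$ for every $\mu\in\Hq$, together with the two scalar identities $\la\la^*=\la^*\la=|\la|^2\in\R$ and $\la+\la^*=2Re(\la)$. I would emphasize that these identities make the relevant scalars \emph{real}, hence central, which is precisely the mechanism that lets the two merely $\R$-linear factors $T_\la$ and $T_{\la^*}$ compose into a right $\Hq$-linear operator.

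First I would compute $T_\la\cdot T_{\la^*}(x)$. Writing $T_{\la^*}(x)=x\la^*-Tx$ and then applying $T_\la=I\la-T$, the $\R$-linearity of $I\la$ and of $T$ distributes the product into four terms. The two diagonal terms are $x\la^*\la=|\la|^2x$ and $T^2(x)$. The two cross terms are $-(Tx)\la$ and $-T(x\la^*)$; at exactly this point I invoke the right $\Hq$-linearity of $T$ to rewrite $T(x\la^*)=(Tx)\la^*$, so the cross terms combine as $-(Tx)(\la+\la^*)=-2Re(\la)(Tx)$. Collecting everything gives $T^2(x)-2Re(\la)(Tx)+|\la|^2x=\Delta_\la(T)(x)$, and since $x$ is arbitrary, $T_\la\cdot T_{\la^*}=\Delta_\la(T)$.

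For the second identity I would avoid repeating the calculation and instead exploit the symmetry under $\la\mapsto\la^*$. Applying the identity just proved with $\la$ replaced by $\la^*$ yields $T_{\la^*}\cdot T_{(\la^*)^*}=\Delta_{\la^*}(T)$, that is $T_{\la^*}\cdot T_\la=\Delta_{\la^*}(T)$. Since $Re(\la^*)=Re(\la)$ and $|\la^*|=|\la|$, we have $\Delta_{\la^*}(T)=T^2-2Re(\la)T+|\la|^2I=\Delta_\la(T)$, so the two compositions coincide with $\Delta_\la(T)$.

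I do not expect a genuine obstacle here: the statement is an algebraic identity whose proof is a bounded-length manipulation. The only point requiring care is to apply the $\Hq$-linearity of $T$ at the single place where a quaternion must be pulled through $T$, since $T_\la$ and $T_{\la^*}$ are themselves only $\R$-linear; mistaking which factor is $\Hq$-linear is essentially the sole way the computation could fail.
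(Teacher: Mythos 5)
Your proof is correct and follows essentially the same route as the paper: a direct four-term expansion of $T_\la\cdot T_{\la^*}(x)$, using the right $\Hq$-linearity of $T$ to rewrite $T(x\la^*)=(Tx)\la^*$ and the real scalars $\la+\la^*=2Re(\la)$, $\la^*\la=|\la|^2$ to assemble $\Delta_\la(T)$. The only (minor, and pleasant) difference is that where the paper disposes of the second identity with ``likewise we can prove,'' you derive it from the first by the substitution $\la\mapsto\la^*$ together with $\Delta_{\la^*}(T)=\Delta_\la(T)$, which is a clean way to avoid repeating the computation.
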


From this result  it is easy to see that when $T_\la$ is not invertible in $\B_{\R}(\Hi)$, $\Delta_\la(T)$ is not invertible in $\B_\R(\Hi)$ and therefore not invertible in $\B(\Hi)$, which means $\sigma(T)\subset \sigma_S(T)$. We will prove that the converse also holds. This will need some extra work since proposition \ref{prop_delta=TT*} only implies a slightly weaker result,  if $T_\la$ is invertible in $\B_{\R}(\Hi)$, $\Delta_\la(T)$ is invertible in $\B_{\R}(\Hi)$. We can find not only that $\sigma_S(T)=\sigma(T)$, but stronger than that, that the point spectrum of $T$ is the  point $S$-spectrum of $T$, the continuous spectrum is the continuous $S$-spectrum; and the residual spectrum is the  residual $S$-spectrum.
\begin{theorem}\label{equal_spectra}
      Let $T \in \B(\hcal)$. We have the following equalities 
      \[
       \sigma_{p}(T)=\sigma_{S,p}(T), \quad \sigma_{r}(T)=\sigma_{S,r}(T) \quad \mbox{ and } \quad \sigma_{c}(T)=\sigma_{S,c}(T).
      \]
\begin{proof}
  
    If $\la \in \sigma_{S,p}(T)$, then $\Delta_\la(T)x=0$ for some $x \in \hcal\setminus \{0\}$.  Taking $y=T_{\la^*}(x)$ and using proposition \ref{prop_delta=TT*}, we have  $T_\la y=0$. Either $\la \in \sigma_p(T)$, and we are done, or $T_{\la^*}(x)=y=0$, in which case $\la^* \in \sigma_p(T)$. Since $\sigma_p(T)$ is circular, $\lambda \in \sigma_p(T)$. We conclude that $\sigma_{S,p}(T) \subseteq \sigma_p(T)$. For the converse inclusion, if $T_\la(x)=0$ for some $x \in \hcal\setminus \{0\}$, using proposition \ref{prop_delta=TT*}, it follows that $\Delta_\la(T)x=0$ for some $x \in \hcal\setminus \{0\}$, that is, $\lambda\in\sigma_{S,p}(T)$.
    

  Let us now prove that $\sigma_{r}(T)=\sigma_{S,r}(T)$. Since $\overline{\Ran( T_\la \cdot T_{\la^*})} \subseteq \overline{\Ran(T_\la)} \neq \hcal$ we have $\sigma_{r}(T)\subseteq\sigma_{S,r}(T)$. To prove the converse inclusion, we will use the contra-positive: $\overline{\Ran(T_\la)}= \hcal$  implies $\overline{\Ran(\Delta_\la (T)} )= \hcal$.
  From (\ref{equality}) we have $T_\la(x)q=T_{\la^*}(xq)$, where $\la^*=q^*\la q$, with $q$ unitary. Since  $T_{\la^*}(\cdot \; q)\in \B_{\R}(\Hi)$ with $x\mapsto T_{\la^*}(x q)$, we have that 
  \[
\hcal=\hcal q= \overline{\Ran(T_\la)} \;q= \overline{\Ran(T_\la)\;q}= \overline{\Ran(T_{\la^*}\big(\cdot \;q)\big)}=\overline{\Ran(T_{\la^*})}.
\]
It follows that  $\overline{\Ran(T_\la)}=\Hi$ implies  $\overline{\Ran(T_{\la^*})}=\hcal$ (and vice-versa). So assume that  $\overline{\Ran(T_\la)}= \overline{\Ran(T_{\la^*})}=\hcal$.
To prove that $\overline{\Ran(\Delta_\la (T)} )= \hcal$ we will find that, for any $x \in \hcal$, there is a sequence $x_n \in \hcal$ such that $\Delta_\la (T)(x_n)=T_{\la^*} \cdot T_{\la}(x_n)  \overset{n}{\longrightarrow}  x$. Since the range of $T_{\la^*}$ is dense there is a sequence $y_n \in \hcal$ such that $T_{\la^*}(y_n) \overset{n}{\longrightarrow} x$; and since $\Ran (T_{\la})$ is also dense, for each $n$ there is a sequence $y_{n,k}$ such that $T_{\la}(y_{n,k}) \overset{k}{\longrightarrow} y_n$.  
For any $\e>0$, let $N \in \N$ be such that $\|T_{\la^*}(y_n)-x\|<\e$, when $n\geq N$. For any of these $n$ pick $k(n) \in \N$ satisfying $\|T_{\la}(y_{n,k(n)})- y_n\|<\e$. Then,
\begin{align*}
    \|T_{\la^*}\cdot T_{\la}(y_{n,k(n)})- x\|&\leq \|T_{\la^*}\cdot T_{\la}(y_{n,k(n)})- T_{\la^*}(y_n)\|+\|T_{\la^*}(y_n)-x\|\\
    &\leq \|T_{\la^*}\|\| T_{\la}(y_{n,k(n)})-y_n\|+\|T_{\la^*}(y_n)-x\|\\
    & \leq (\|T_{\la^*}\|+1)\e.
\end{align*}
Hence we have a sequence $x_n =y_{n,k(n)} $ such that $\Delta_\la (T)(x_n) \overset{n}{\longrightarrow}  x$, thus $x \in \overline{\Ran(\Delta_\la (T) )}$. 


Finally, we will see that $\sigma_{c}(T)=\sigma_{S,c}(T)$.
     Assume $\la \in \sigma_c(T)$, then $T_\la$ is not bounded below and there is a sequence of unitary vectors $x_n \in\hcal$ such that $T_\la (x_n) \to 0$. By continuity of $T_{\la^*}$, we have $\Delta_\la(T)(x_n)=T_{\la^*}\cdot T_\la (x_n) \to 0$, i.e., $\la \in \sigma_{S,c}(T)$. 
   On the other hand, if $\la \in \sigma_{S,c}(T)$, there is a sequence of unitary vectors $x_n$ such that $\Delta_\la(T)(x_n)=T_{\la}\cdot T_{\la^*} (x_n) \to 0$. Then either  $\lim\inf \|T_{\la^*} (x_n)\| \to 0$, in which case lemma \ref{circular} implies that $\la \in\sigma_c(T)$, or  $\ell \coloneqq \lim\inf \|T_{\la^*} (x_n)\|>0$. We can take a subsequence $x_{n_k}$ of $x_n$ where $\|T_{\la^*} (x_{n_k})\|\geq \ell$.  For simplicity we denote such subsequence by $x_n$. Let $y_n=T_{\la^*} (x_n)/\|T_{\la^*} (x_n)\|$. Clearly,
   \[
   \|T_\la y_n\|=  \frac{\|T_\la \cdot T_{\la^*} (x_n)\|}{\|T_{\la^*} (x_n)\|}=  \frac{\|\Delta_\la(T)(x_n)\|}{\|T_{\la^*} (x_n)\|} \leq \frac{ \|\Delta_\la(T)(x_n)\|}{\ell}\to 0,
   \]
   and so $\la \in \sigma_c(T)$.

\end{proof}
      
\end{theorem}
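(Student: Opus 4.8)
The plan is to exploit the factorization $\Delta_\la(T)=T_\la\cdot T_{\la^*}=T_{\la^*}\cdot T_\la$ from Proposition \ref{prop_delta=TT*} together with the circularity of the three right-spectral components established in Lemma \ref{circular}. I would treat the three equalities in the order point, residual, continuous, because each class in the partition is defined by excluding the previous ones, and the arguments for the residual and continuous parts will want to reuse the earlier identifications $\sigma_p=\sigma_{S,p}$ and $\sigma_r=\sigma_{S,r}$ to locate $\la$ correctly in the partition of $\sigma_S(T)$.

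For the point spectrum one inclusion is immediate: if $T_\la x=0$ for some $x\neq0$, then $\Delta_\la(T)x=T_{\la^*}\cdot T_\la x=0$, so $\la\in\sigma_{S,p}(T)$. For the reverse I would start from $\Delta_\la(T)x=T_\la(T_{\la^*}x)=0$ with $x\neq0$ and set $y=T_{\la^*}x$; either $y\neq0$, giving $\la\in\sigma_p(T)$ at once, or $y=0$, giving $\la^*\in\sigma_p(T)$, in which case circularity of $\sigma_p(T)$ together with $\la^*\sim\la$ (equal real parts and equal moduli of imaginary parts) returns $\la\in\sigma_p(T)$.

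For the residual spectrum the inclusion $\sigma_r(T)\subseteq\sigma_{S,r}(T)$ follows from $\Ran(\Delta_\la(T))\subseteq\Ran(T_\la)$, using the already-proved $\sigma_p=\sigma_{S,p}$ to keep $\la$ out of the point $S$-spectrum. The reverse inclusion is the technical heart, and I would argue by contrapositive. Assuming $\overline{\Ran(T_\la)}=\hcal$, I would first upgrade this to $\overline{\Ran(T_{\la^*})}=\hcal$ by invoking \eqref{equality} in the form $T_\la(x)q=T_{\la^*}(xq)$ (with $\la^*=q^*\la q$, $q$ unitary), which yields $\overline{\Ran(T_{\la^*})}=\overline{\Ran(T_\la)}\,q$; then, with both factors dense, I would show the composition $\Delta_\la(T)=T_{\la^*}\cdot T_\la$ still has dense range, approximating an arbitrary $x$ first by $T_{\la^*}(y_n)\to x$ and each $y_n$ by $T_\la(y_{n,k})$, and closing the gap with a diagonal $\e$-estimate that uses boundedness of $T_{\la^*}$.

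Finally, for the continuous spectrum I would use both previous equalities to identify the excluded sets. If $\la\in\sigma_c(T)$, a sequence of unit vectors with $T_\la x_n\to0$ gives $\Delta_\la(T)x_n=T_{\la^*}\cdot T_\la x_n\to0$ by continuity, so $\la\in\sigma_{S,c}(T)$. Conversely, from unit vectors with $\Delta_\la(T)x_n=T_\la\cdot T_{\la^*}x_n\to0$ I would split on $\ell\coloneqq\liminf\|T_{\la^*}x_n\|$: if $\ell=0$, a subsequence shows $T_{\la^*}$ is not bounded below, and since $\la^*\notin\sigma_p(T)\cup\sigma_r(T)$ by circularity, we get $\la^*\in\sigma_c(T)$ and hence $\la\in\sigma_c(T)$; if $\ell>0$, normalizing $y_n=T_{\la^*}x_n/\|T_{\la^*}x_n\|$ gives $\|T_\la y_n\|\leq\|\Delta_\la(T)x_n\|/\ell\to0$, so $T_\la$ is not bounded below and $\la\in\sigma_c(T)$. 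The step I expect to be the main obstacle is the residual converse: the general fact that a composition of two bounded dense-range operators again has dense range must be combined with the nontrivial transfer of density between $T_\la$ and $T_{\la^*}$ through similarity, since $\Delta_\la(T)$ factors through $T_{\la^*}$ and $T_\la$ but their ranges are only linked via \eqref{equality}.
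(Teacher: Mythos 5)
Your proposal is correct and follows essentially the same route as the paper's proof: the factorization $\Delta_\la(T)=T_\la\cdot T_{\la^*}=T_{\la^*}\cdot T_\la$ plus circularity for the point part, the transfer of density from $\Ran(T_\la)$ to $\Ran(T_{\la^*})$ via \eqref{equality} followed by the same diagonal $\e$-estimate for the residual converse, and the same $\liminf$ dichotomy with normalization $y_n=T_{\la^*}x_n/\|T_{\la^*}x_n\|$ for the continuous converse. Your write-up is in fact slightly more explicit than the paper about why $\la$ lands in the correct component of each partition (e.g.\ using $\sigma_p=\sigma_{S,p}$ to exclude the point $S$-spectrum in the residual case, and $\la^*\notin\sigma_p(T)\cup\sigma_r(T)$ by circularity in the $\ell=0$ case), details the paper leaves implicit.
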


The main result of the paper is now a direct consequence of theorem \ref{equal_spectra}.

\begin{theorem}
     Let $T \in \B(\hcal)$, then $\sigma(T)=\sigma_S(T)$.
\end{theorem}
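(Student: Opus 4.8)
The plan is to reduce the statement to the componentwise equalities already obtained in Theorem \ref{equal_spectra} by invoking the partitions of both spectra into their point, continuous, and residual parts. Recall from the discussion preceding the definition of $\sigma(T)$ that the right spectrum decomposes as the disjoint union
\[
\sigma(T) = \sigma_p(T) \cup \sigma_c(T) \cup \sigma_r(T),
\]
and, likewise, that the $S$-spectrum decomposes as
\[
\sigma_S(T) = \sigma_{S,p}(T) \cup \sigma_{S,c}(T) \cup \sigma_{S,r}(T).
\]

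First I would record the three identities furnished by Theorem \ref{equal_spectra}, namely $\sigma_p(T) = \sigma_{S,p}(T)$, $\sigma_c(T) = \sigma_{S,c}(T)$, and $\sigma_r(T) = \sigma_{S,r}(T)$. Substituting these into the decomposition of $\sigma(T)$ and taking unions yields
\[
\sigma(T) = \sigma_{S,p}(T) \cup \sigma_{S,c}(T) \cup \sigma_{S,r}(T) = \sigma_S(T),
\]
which is precisely the asserted equality.

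There is no genuine obstacle here: once the componentwise equalities of Theorem \ref{equal_spectra} are in hand, the conclusion follows by a purely set-theoretic manipulation, since the union of pairwise equal sets is equal. The only point deserving a word of care is that both decompositions are honest disjoint unions exhausting their respective spectra, so that no part of $\sigma_S(T)$ is left unaccounted for; this is guaranteed by the very definitions of the three parts, each carved out of the complement of the parts defined before it. Alternatively, one could note that the inclusion $\sigma(T) \subseteq \sigma_S(T)$ was already obtained directly from Proposition \ref{prop_delta=TT*}, whence it would suffice to extract from Theorem \ref{equal_spectra} only the reverse inclusions $\sigma_{S,p}(T) \subseteq \sigma_p(T)$, $\sigma_{S,c}(T) \subseteq \sigma_c(T)$, and $\sigma_{S,r}(T) \subseteq \sigma_r(T)$; but the full componentwise equalities render even this shortcut superfluous.
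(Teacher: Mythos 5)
Your proof is correct and follows exactly the route the paper takes: the paper also derives this theorem as a direct consequence of Theorem \ref{equal_spectra}, taking the union of the componentwise equalities across the disjoint decompositions of $\sigma(T)$ and $\sigma_S(T)$. Your extra remark about the shortcut via Proposition \ref{prop_delta=TT*} is a valid observation but, as you note yourself, unnecessary once the componentwise equalities are available.
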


\end{document}